\newcommand{\fiab}[2]{\prescript{\text{AB}}{#1} I_{t}^{#2}}
\newcommand{\fdabc}[2]{\prescript{\text{ABC}}{#1} D_{t}^{#2}}
\newcommand{\fdabr}[2]{\prescript{\text{ABR}}{#1} D_{t}^{#2}}
\newcommand{\fdabry}[2]{\prescript{\text{ABR}}{#1} D_{y}^{#2}}
\newcommand{\fdabcy}[2]{\prescript{\text{ABC}}{#1} D_{y}^{#2}}
\date{\today}
\newtheorem{theorem}{Theorem}[section]
\theoremstyle{definition}
\newtheorem{definition}[theorem]{Definition}
\theoremstyle{remark}
\newtheorem{remark}{Remark}
\numberwithin{equation}{section}
\begin{document}

\title[Numerical computation of a fractional derivative]
{Numerical computation of a fractional derivative with non-local and non-singular kernel}

\author[Djida]{J.D. Djida}
\address[Djida]{African Institute for Mathematical Sciences (AIMS), P.O. Box 608, Limbe Crystal Gardens, South West Region, Cameroon. }
\email[Djida]{jeandaniel.djida@aims-cameroon.org}
\author[Area]{I. Area}
\address[Area]{Departamento de Matem\'atica Aplicada II,
              E.E. Telecomunicaci\'on,
              Universidade de Vigo,
              Campus Lagoas-Marcosende,
              36310 Vigo, Spain.}
\email[Area]{area@uvigo.es}
\author[Atangana]{A. Atangana}
\address[Atangana]{Institute for Groundwater Studies, Faculty of Natural and Agricultural Sciences, University of the Free State, 9301, Bloemfontein, South Africa.}
\email[Atangana]{abdonatangana@yahoo.fr}

\thanks{The first author is indebted to the AIMS-Cameroon 2015--2016  tutor fellowship.}

%\subjclass[2010]{Primary  {\color{red}{Missing}} } 
\keywords{Fractional differential equations; Atangana-Baleanu fractional derivative; Fractional initial value problems}
% \PACS{PACS code1 \and PACS code2 \and more} 
 %\subclass{26A33\and 34A08\and 65L06.}% AMS Subject Classification (2010)
% \subclass{MSC code1 \and MSC code2 \and more}

\begin{abstract}
A numerical scheme for solving fractional initial value problems involving the Atangana-Baleanu fractional derivative is presented.  
Some examples for the proposed method are included, both for equations and systems of fractional initial value problems.
\end{abstract}

\maketitle

\section{Introduction}
For a long period the fractional calculus, that is derivatives and integrals of nonnatural order, was developed as a purely theoretic field \cite{MR0361633,samko1993fractional}. Very recently, it has been shown that it can be used to explain certain physical problems, and also for processes where memory effects are important \cite{memory}. In this direction, while the classical derivative gives us the instantaneous rate of change of a function, the parameter of the fractional derivative can be understood as a memory index of the variation of the function, taking into account the previous instants. For this reason, in last years, fractional calculus has been fruitfully applied to different fields \cite{MR3323914,machado2010}, and in particular to epidemiological models \cite{sulami,nosebola,nosebola2}. It is also interesting to mention some recent works in order to find appropriate fractional analogues of the so-called special functions \cite{djida,ariane,Klimek2014402}.

\medskip
There exist various definitions ---Riemann, Liouville, Caputo, Grunwald-Letnikov, Marchaud, Weyl, Riesz, Feller, and others--- for fractional derivatives and integrals, (see e.g. \cite{Richard,Hilfer,MR0361633,samko1993fractional} and references therein). This diversity of definitions is due to the fact that fractional operators take different kernel representations in different function spaces.
In a recent work Caputo and Fabrizio \cite{capfab} introduced a new fractional derivative, analyzed e.g. in \cite{losnieto}. Moreover, in \cite{atanbaleanu} another fractional derivative with non-local and non-singular kernel was proposed. 

\medskip The main aim of this article is to present a numerical scheme for solving fractional initial value problems involving the fractional derivative introduced by Atangana and Baleanu \cite{atanbaleanu}.

\medskip The paper is structured as follows. In Section \ref{section:2} the basic definitions and notations are introduced, including the Atangana-Baleanu fractional derivatives and integral. In Section \ref{section:3} the relationship between Atangana-Baleanu derivative in Riemann-Liouville sense and Atangana-Baleanu fractional integral is derived. Finally, in Section \ref{section:4} a numerical scheme for solving fractional initial value problems (and fractional systems) involving the Atangana-Baleanu fractional derivatives is proposed, and some numerical examples are also included.

\section{Basic definitions and notations}\label{section:2}

The exponential function, $\exp(t)$, plays a fundamental role in mathematics and it is really useful in the theory of integer order differential equations. In the case of fractional order, the Mittag-Leffler function appears in a natural way.
\begin{definition}
The function $E_\alpha(z)$ is defined as
\begin{equation}\label{eq:mlf}
E_\alpha(z)=\sum_{k=0}^\infty\dfrac{z^k}{\Gamma(\alpha k+1)},\qquad\alpha>0.
\end{equation}
\end{definition}
This function provides a simple generalization of the exponential function because of the replacement of $k!=\Gamma(k+1)$ by $(\alpha k)!=\Gamma(\alpha k+1)$ in the denominator of the terms of the exponential series. Due to this, such function can be considered the simplest nontrivial generalization of exponential function.

Let $(a,b) \subset {\mathbb{R}}$ and let $v$ be a function of the Hilbert space $L^{2}(a,b)$. It can be identified to a distribution on $(a,b)$ as a function of $L^{1}_{\text{loc}}(a,b)$, also denoted as $v$ and we can define its derivative $v'$ as distribution on $(a,b)$. In general $v'$ is not an element of $L^{2}(a,b)$.
\begin{definition}
The Sobolev space of order $1$ on $(a,b)$ is defined as
\[
H^{1}(a,b)=\{ v \in L^{2}(a,b) \,\vert\, \,\, v' \in L^{2}(a,b) \}.
\]
\end{definition}

\begin{definition}
Let $f \in H^{1}(a,b)$, $b>a$, $\alpha \in [0,1]$. The Atangana-Baleanu fractional derivative of $f$ of order $\alpha$ in Caputo sense  with base point $a$ is defined at a point $t \in (a,b)$
\begin{equation}\label{eq:abfdc}
\fdabc{a}{\alpha} f(t)= \frac{B(\alpha)}{1-\alpha} \int_{a}^{t} f'(x) E_{\alpha} \left[ -\alpha \frac{(t-x)^{\alpha}}{1-\alpha} \right] dx.
\end{equation}
\end{definition}

\begin{definition}
Let $f \in H^{1}(a,b)$, $b>a$, $\alpha \in [0,1]$. The Atangana-Baleanu fractional derivative of $f$ of order $\alpha$ of $f$ in Riemann-Liouville sense with base point $a$ is defined at a point $t \in (a,b)$ as
\begin{equation}\label{eq:fdab}
\fdabr{a}{\alpha} f(t)=\frac{B(\alpha)}{1-\alpha} \frac{d}{dt} \int_{a}^{t} f(x) E_{\alpha} \left[ -\alpha \frac{(t-x)^{\alpha}}{1-\alpha} \right] dx.
\end{equation}
\end{definition}

\begin{definition}\label{def:abfi}
The Atangana-Baleanu fractional integral of order $\alpha$ with base point $a$ is defined as
\begin{equation}\label{eq:fi}
\fiab{a}{\alpha} f(t) = \frac{1-\alpha}{B(\alpha)}f(t) + \frac{\alpha}{B(\alpha) \Gamma(\alpha)} \int_{a}^{t} f(y) (t-y)^{\alpha-1} dy.
\end{equation}
\end{definition}
\begin{remark}
Notice that if $\alpha=0$ in \eqref{eq:fi} we recover the initial function, and if $\alpha=1$ in \eqref{eq:fi} we obtain the ordinary integral.
\end{remark}

\begin{remark}
If we impose that the average in \eqref{eq:fi} is equal to one,
\[
\frac{1-\alpha}{B(\alpha)}+ \frac{\alpha}{B(\alpha)\Gamma(\alpha)}=1,
\]
we obtain
\begin{equation}
B(\alpha)=1-\alpha +\frac{\alpha }{\Gamma (\alpha )}.
\end{equation}
\end{remark}

Let us recall the following result \cite[Theorem 3]{atbalchaos} concerning the Laplace transform of both fractional derivatives.
\begin{theorem}\label{theorem:lt}
The following Laplace transforms hold
\begin{align}
{\mathcal{L}} \left[\fdabc{0}{\alpha} f(t)  \right] &= \frac{B(\alpha)}{1-\alpha} \frac{p^{\alpha} {\mathcal{L}} \{ f(t)\}(p) - p^{\alpha-1} f(0)}{p^{\alpha}+\alpha/(1-\alpha)}, \label{laplace:caputo} \\
{\mathcal{L}} \left[ \fdabr{0}{\alpha} f(t) \right] &=\frac{B(\alpha)}{1-\alpha} \frac{p^{\alpha} {\mathcal{L}} \{f(t)\}(p)}{p^{\alpha}+\alpha/(1-\alpha)} .
\end{align}
\end{theorem}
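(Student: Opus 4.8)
The plan is to exploit the convolution structure shared by both operators. Fixing the base point $a=0$ and writing $K_\alpha(t)=E_\alpha\!\left[-\alpha t^\alpha/(1-\alpha)\right]$ for the Mittag-Leffler kernel, the Caputo-type derivative $\fdabc{0}{\alpha} f$ is, up to the constant $B(\alpha)/(1-\alpha)$, the Laplace convolution $f' * K_\alpha$, while the Riemann-Liouville-type derivative $\fdabr{0}{\alpha} f$ is the same constant times the ordinary time derivative of $f * K_\alpha$. The whole computation thus reduces to three ingredients: the Laplace transform of $K_\alpha$, the convolution theorem $\mathcal{L}\{g*h\}=\mathcal{L}\{g\}\,\mathcal{L}\{h\}$, and the elementary differentiation rule $\mathcal{L}\{g'\}(p)=p\,\mathcal{L}\{g\}(p)-g(0)$.

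First I would compute $\mathcal{L}\{K_\alpha\}$. Inserting the series \eqref{eq:mlf} and integrating term by term with $\mathcal{L}\{t^{\alpha k}\}(p)=\Gamma(\alpha k+1)/p^{\alpha k+1}$, the gamma factors cancel and the transform collapses to a geometric series in the ratio $-\alpha/\big((1-\alpha)p^\alpha\big)$, whose sum gives
\[
\mathcal{L}\{K_\alpha\}(p)=\frac{p^{\alpha-1}}{p^{\alpha}+\alpha/(1-\alpha)}.
\]
This is the one step demanding care, and I expect it to be the main obstacle: the interchange of summation and integration must be justified (by absolute summability of the transformed series for $p$ large), and the geometric series converges only in the range $p^{\alpha}>\alpha/(1-\alpha)$, after which the closed form is extended by analytic continuation. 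Everything else is essentially formal once this identity is established.

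Next I would dispatch the two cases separately. For the Caputo derivative, the convolution theorem yields $\mathcal{L}\{\fdabc{0}{\alpha}f\}=\frac{B(\alpha)}{1-\alpha}\,\mathcal{L}\{f'\}\cdot\mathcal{L}\{K_\alpha\}$; substituting $\mathcal{L}\{f'\}(p)=p\,\mathcal{L}\{f\}(p)-f(0)$ together with the kernel transform, the factor $p^{\alpha-1}$ distributes across the two terms and produces precisely $p^{\alpha}\mathcal{L}\{f\}(p)-p^{\alpha-1}f(0)$ in the numerator, which is \eqref{laplace:caputo}. For the Riemann-Liouville derivative, I would write $\fdabr{0}{\alpha}f=\frac{B(\alpha)}{1-\alpha}\frac{d}{dt}(f*K_\alpha)$ and apply the differentiation rule to $g=f*K_\alpha$. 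Since the convolution integral vanishes at $t=0$, the boundary term $g(0)$ drops out, so $\mathcal{L}\{\fdabr{0}{\alpha}f\}=\frac{B(\alpha)}{1-\alpha}\,p\,\mathcal{L}\{f\}\cdot\mathcal{L}\{K_\alpha\}$, which simplifies at once to the second formula. The only point to check here is $g(0)=0$, which is immediate from the $H^1$-regularity of $f$ and the continuity of $K_\alpha$.
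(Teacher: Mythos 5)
Your proof is correct. Note that the paper itself contains no proof of this statement---it is recalled verbatim from \cite[Theorem 3]{atbalchaos}---and your argument supplies precisely the standard derivation behind that citation: recognizing both operators as convolutions with the kernel $K_\alpha(t)=E_{\alpha}\left[-\alpha t^{\alpha}/(1-\alpha)\right]$, computing $\mathcal{L}\{K_\alpha\}(p)=p^{\alpha-1}/\bigl(p^{\alpha}+\alpha/(1-\alpha)\bigr)$ by term-by-term transformation of the series \eqref{eq:mlf}, and then invoking the convolution theorem together with $\mathcal{L}\{g'\}(p)=p\,\mathcal{L}\{g\}(p)-g(0)$. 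The two technical points you flag---justifying the interchange of sum and integral for large real $p$ followed by analytic continuation, and the vanishing of $(f*K_\alpha)(0)$ in the Riemann--Liouville case---are exactly the details that require care, so your write-up both fills the gap the paper leaves by citation and does so along the same route as the cited source.
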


\section{Further properties of Atangana-Baleanu fractional derivative}\label{section:3}

In this section we establish the relationship between Atangana-Baleanu derivative in Riemann-Liouville sense and Atangana-Baleanu fractional integral.

\begin{theorem}
Let $f \in H^{1}(a,b)$, $b>a$, such that the Atangana-Baleanu fractional derivative exists. Then, the following relations hold true
\begin{align}
\fiab{0}{\alpha} \left \{ \fdabr{0}{\alpha} f(t) \right \} = f(t), \label{eq:new1} \\
\fiab{0}{\alpha} \left \{ \fdabc{0}{\alpha} f(t) \right \} = f(t)-f(0). \label{eq:new2}
\end{align}
\end{theorem}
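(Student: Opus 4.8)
The natural tool here is the Laplace transform, since Theorem \ref{theorem:lt} already supplies $\mathcal{L}[\fdabr{0}{\alpha} f]$ and $\mathcal{L}[\fdabc{0}{\alpha} f]$ in closed form, and the Atangana-Baleanu integral in \eqref{eq:fi} is a sum of a multiple of $f$ and a Riemann-Liouville-type convolution whose transform is elementary. The plan is to compute the Laplace transform of the composition on the left-hand side of each identity, simplify algebraically until it matches the transform of the claimed right-hand side, and then invoke injectivity of the Laplace transform to conclude equality of the functions themselves.

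First I would record the Laplace transform of the integral operator. Writing the convolution $\int_0^t f(y)(t-y)^{\alpha-1}\,dy$ as $f\ast k$ with $k(t)=t^{\alpha-1}$, and using $\mathcal{L}[t^{\alpha-1}](p)=\Gamma(\alpha)/p^{\alpha}$, the convolution theorem gives
\begin{equation*}
\mathcal{L}\!\left[\fiab{0}{\alpha} g\right](p)=\frac{1}{B(\alpha)}\,\frac{(1-\alpha)p^{\alpha}+\alpha}{p^{\alpha}}\,\mathcal{L}\{g\}(p),
\end{equation*}
valid for any admissible $g$; here the two terms of \eqref{eq:fi} combine after the factor $\Gamma(\alpha)$ in the denominator cancels against the $\Gamma(\alpha)$ coming from $\mathcal{L}[t^{\alpha-1}]$.

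Next, for \eqref{eq:new1} I would take $g=\fdabr{0}{\alpha} f$ and substitute the second formula of Theorem \ref{theorem:lt}. The key algebraic observation is that $p^{\alpha}+\alpha/(1-\alpha)=\bigl((1-\alpha)p^{\alpha}+\alpha\bigr)/(1-\alpha)$, so that the factors $B(\alpha)$, $(1-\alpha)$, $p^{\alpha}$, and $(1-\alpha)p^{\alpha}+\alpha$ all cancel, leaving exactly $\mathcal{L}\{f\}(p)$; injectivity then yields \eqref{eq:new1}. For \eqref{eq:new2} the same substitution with the Caputo transform produces, after identical cancellations, $\mathcal{L}\{f\}(p)-p^{\alpha-1}f(0)/p^{\alpha}=\mathcal{L}\{f\}(p)-f(0)/p$; recognizing $f(0)/p=\mathcal{L}\{f(0)\}(p)$ as the transform of the constant function $f(0)$ gives $\mathcal{L}\{f(t)-f(0)\}(p)$, and again injectivity finishes the argument.

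The computations are short, so the main point to be careful about is the bookkeeping of these three algebraic factors so that the cancellation is clean, together with the justification that all Laplace transforms involved exist — which follows from $f\in H^{1}(a,b)$ together with the standing hypothesis that the derivatives exist — so that injectivity on the relevant function class may be applied. The single genuinely conceptual step is matching $p^{\alpha-1}f(0)/p^{\alpha}$ with the transform of the constant $f(0)$, which is precisely what produces the $-f(0)$ term distinguishing the Caputo relation \eqref{eq:new2} from the Riemann-Liouville relation \eqref{eq:new1}.
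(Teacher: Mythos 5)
Your proposal is correct and follows essentially the same route as the paper: apply the Laplace transform to the composed operator, use Theorem \ref{theorem:lt} together with the convolution theorem for the kernel $t^{\alpha-1}$, cancel the algebraic factors, and conclude by injectivity of the transform. The only cosmetic difference is that you package the Atangana--Baleanu integral's transform as the single multiplier $\bigl((1-\alpha)p^{\alpha}+\alpha\bigr)/\bigl(B(\alpha)p^{\alpha}\bigr)$ before substituting, whereas the paper substitutes and simplifies term by term; the computation is identical in substance.
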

\begin{proof}
We establish the first relation \eqref{eq:new1} by using the Laplace transform. From Definition \ref{def:abfi} we have
\begin{equation}
\fiab{0}{\alpha} \left \{ \fdabr{0}{\alpha} f(t) \right \} = \frac{1-\alpha}{B(\alpha)} \fdabr{0}{\alpha} f(t) + \frac{\alpha}{B(\alpha)\Gamma(\alpha)} \int_{0}^{t} (t-y)^{\alpha-1} \fdabry{0}{\alpha} f(y)dy.
\end{equation}
By applying on both sides of the latter equation the Laplace transform, by using Theorem \ref{theorem:lt} we obtain
\begin{multline}
{\mathcal{L}} \left[ \fiab{0}{\alpha} \left \{ \fdabr{0}{\alpha} f(t) \right \} \right] 
\\= \frac{1-\alpha}{B(\alpha)} {\mathcal{L}} \left[ \fdabr{0}{\alpha} f(t) \right] 
+\frac{\alpha}{B(\alpha)\Gamma(\alpha)}   {\mathcal{L}} \left[ \int_{0}^{t} (t-y)^{\alpha-1} \fdabry{0}{\alpha} f(y)dy \right] \\
=\frac{1-\alpha}{B(\alpha)} \frac{B(\alpha)}{1-\alpha} \frac{s^{\alpha} F(s)}{s^{\alpha}+\frac{\alpha}{1-\alpha}}
+ \frac{\alpha}{B(\alpha)}\frac{B(\alpha)}{1-\alpha} s^{-\alpha} \frac{s^{\alpha} F(s)}{s^{\alpha}+\frac{\alpha}{1-\alpha}}
\\ =\frac{s^{\alpha} F(s)}{s^{\alpha}+\frac{\alpha}{1-\alpha}} + \frac{\alpha}{1-\alpha} \frac{F(s)}{s^{\alpha}+\frac{\alpha}{1-\alpha}} = F(s).
\end{multline}
We shall now obtain \eqref{eq:new2} by using again the Laplace transform. From Definition \ref{def:abfi} we have
\begin{equation}\label{eq:aux2}
\fiab{0}{\alpha} \left \{ \fdabc{0}{\alpha} f(t) \right \} = \frac{1-\alpha}{B(\alpha)} \fdabr{0}{\alpha} f(t) + \frac{\alpha}{B(\alpha)\Gamma(\alpha)} \int_{0}^{t} (t-y)^{\alpha-1} \fdabcy{0}{\alpha} f(y)dy.
\end{equation}
If we apply the Laplace transform to both sides of  \eqref{eq:aux2}, by using Theorem \ref{theorem:lt} it yields
\begin{multline}
{\mathcal{L}} \left[ \fiab{0}{\alpha} \left \{ \fdabc{0}{\alpha} f(t) \right \} \right] 
\\=\frac{1-\alpha}{B(\alpha)} {\mathcal{L}} \left[\fdabc{0}{\alpha} f(t) \right] + \frac{\alpha}{B(\alpha)\Gamma(\alpha)} {\mathcal{L}}  \left[ \int_{0}^{t} (t-y)^{\alpha-1} \fdabcy{0}{\alpha} f(y)dy \right] \\
= \frac{1-\alpha}{B(\alpha)} \frac{B(\alpha)}{1-\alpha} \frac{p^{\alpha} {\mathcal{L}} \{ f(t)\}(p) - p^{\alpha-1} f(0)}{p^{\alpha}+\alpha/(1-\alpha)} + \frac{\alpha}{B(\alpha)\Gamma(\alpha)} {\mathcal{L}}  \left[ \int_{0}^{t} (t-y)^{\alpha-1} \fdabcy{0}{\alpha} f(y)dy \right] \\
=\frac{p^{\alpha} F(p) - p^{\alpha-1} f(0)}{p^{\alpha}+\alpha/(1-\alpha)} + \frac{\alpha}{B(\alpha)} p^{-\alpha} \frac{B(\alpha)}{1-\alpha}  \frac{p^{\alpha} F(p) - p^{\alpha-1} f(0)}{p^{\alpha}+\alpha/(1-\alpha)}=F(p)-\frac{f(0)}{p}.
\end{multline}
\end{proof}

\section{Numerical solution of fractional initial value problems involving the Atangana-Baleanu fractional derivative in Caputo sense}\label{section:4}

Let us now consider the fractional initial value problem
\begin{equation}\label{eq:6abc}
\begin{cases}
\fdabc{0}{\alpha} y(t)=g(t,y(t)), \quad t \in [0,T], \\
y(0)=y_{0},
\end{cases}
\end{equation}
where $\fdabc{0}{\alpha} y(t)$ is defined in \eqref{eq:abfdc}. If we apply the fractional integral defined in \eqref{eq:fi} to \eqref{eq:6abc}  it yields
\begin{equation}
y(t)-y(0) = \frac{1-\alpha}{B(\alpha)}g(t,y(t)) + \frac{\alpha}{B(\alpha) \Gamma(\alpha)} \int_{0}^{t} g(s,y(s)) (t-s)^{\alpha-1} ds,
\end{equation}
by using \eqref{eq:new2}. Let us assume that \eqref{eq:6abc} has a unique solution on $[0,T]$, and consider the lattice $\{t_{n}=nh\}_{n=0}^{N}$ where $N$ is a nonnegative integer, with $h=T/N$. Then,
\begin{equation}
y(t_{n+1}) =y(0)+ \frac{1-\alpha}{B(\alpha)}g(t_{n+1},y(t_{n+1})) + \frac{\alpha}{B(\alpha) \Gamma(\alpha)} \int_{0}^{t_{n+1}} g(s,y(s)) (t_{n+1}-s)^{\alpha-1} ds.
\end{equation}
In a similar way as in \cite{Diethelm} for the Caputo fractional derivative, for the integral part, we use the product trapezoidal quadrature formula to replace the integral where the nodes $t_{j}$ are computed with respect to the weight function $(t_{n+1}-\cdot)^{\alpha-1}$. That is, 
\begin{equation}
\int_{0}^{t_{n+1}} (t_{n+1}-s)^{\alpha-1} g(s,y(s))ds \approx \int_{0}^{t_{n+1}} (t_{n+1}-s)^{\alpha-1} \bar{g}_{n+1}(s)ds,
\end{equation}
where now the $\bar{g}_{n+1}$ is the piecewise linear interpolant for $g$ with nodes and knots chosen at the nodes $t_{j}$. Hence,
\begin{equation}
\int_{0}^{t_{n+1}} (t_{n+1}-z)^{\alpha-1} \bar{g}_{n+1}(z)dz = \frac{h^{\alpha}}{\alpha(\alpha+1)} \sum_{j=0}^{n+1} a_{j,n+1} g(t_{j}),
\end{equation}
where
\begin{equation}
a_{j,n+1}=\begin{cases}
n^{\alpha+1}-(n-\alpha)(n+1)^{\alpha}, & j=0, \\
(n-j+2)^{\alpha+1}+(n-j)^{\alpha+1}-2(n-j+1)^{\alpha+1}, & 1 \leq j \leq n, \\
1, & j=n+1,
\end{cases}
\end{equation}
which can be compared with \cite{Diethelm}. As a consequence, we obtain the corrector formula
\begin{multline}\label{eq:corrector}
y_{h}(t_{n+1}) =y(0)+ \frac{1-\alpha}{B(\alpha)} g(t_{n+1},y_{h}(t_{n+1})) \\ + \frac{\alpha\,h^{\alpha}}{B(\alpha) \Gamma(\alpha+2)} \left( g(t_{n+1},y_{h}^{P}(t_{n+1}))+\sum_{j=0}^{n} a_{j,n+1} g(t_{j},y_{h}(t_{j})) \right).
\end{multline}
In order to obtain the predictor formula, we follow again \cite{Diethelm} and use the product rectangle rule
\begin{equation}
\int_{0}^{n+1} (t_{n+1}-z)^{\alpha-1} g(z)dz \approx  \frac{h^{\alpha}}{\alpha}  \sum_{j=0}^{n}((n+1-j)^{\alpha}-(n-j)^{\alpha}) g(t_{j}),
\end{equation}
in order to obtain
\begin{equation}\label{eq:predictor}
y_{h}^{P}(t_{n+1})=y(0)+ \frac{1-\alpha}{B(\alpha)} g(t_{n+1},y_{h}(t_{n+1})) 
+ \frac{h^{\alpha}}{B(\alpha) \Gamma(\alpha)}   \sum_{j=0}^{n}((n+1-j)^{\alpha}-(n-j)^{\alpha}) g(t_{j},y_{h}(t_{j})). 
\end{equation}

\subsection{Example 1}

Let us consider the initial value problem
\begin{equation}\label{eq:ex1}
\begin{cases}
\displaystyle{\fdabc{0}{\alpha} y(t)=t, \quad t \in [0,1]}, \\[2mm]
y(0)=y_{0}.
\end{cases}
\end{equation}
The explicit solution of the above fractional initial value problem is 
\begin{equation}\label{eq:solex1}
y(t)=\frac{\alpha  \left(\alpha  (\alpha +1) y(0)+t^{\alpha +1}\right)-(\alpha -1) \Gamma (\alpha +2) (y(0)+t)}{\alpha  (\alpha +1) (\alpha -\alpha \Gamma (\alpha )+\Gamma (\alpha ))},
\end{equation}
which can be obtained by using \eqref{laplace:caputo}. Notice that as $\alpha \to 1$ \eqref{eq:solex1} goes formally to $t^{2}/2+y(0)$. In the particular case $\alpha=1/2$ then \eqref{eq:solex1} reduces to
\begin{equation}
y(t)=y_{0}+\frac{\frac{4 t^{3/2}}{\sqrt{\pi }}+3 t}{6 B(1/2)}.
\end{equation}
 If we consider $10$ equidistant nodes in $[0,1]$, from \eqref{eq:predictor} we obtain for $\alpha=1/2$ and the initial condition $y(0)=0$ the following table of predicted values in the interval $[0,1]$,
\begin{multline*}
\{0., 0.0639309, 0.150674, 0.246866, 0.350309, 0.459864, 0.574804, \\
0.694613, 0.818899, 0.947352, 1.07972\}.
\end{multline*}
Moreover, from \eqref{eq:corrector} for the corrected values we have
\begin{multline*}
\{0., 0.079139, 0.170877, 0.270816, 0.377388, 0.489686, 0.607097, \\
0.729174, 0.855567, 0.985996, 1.12023\}.
\end{multline*}
In Figure \ref{fig:fig1} we show the exact solution and its approximation by using the predictor-corrector method proposed. We would like to notice that for larger number of nodes in $[0,1]$ the approximated solution coincides with the exact solution in the whole interval.
\begin{figure}[ht!]
\centering 
\includegraphics[width=0.4\textwidth]{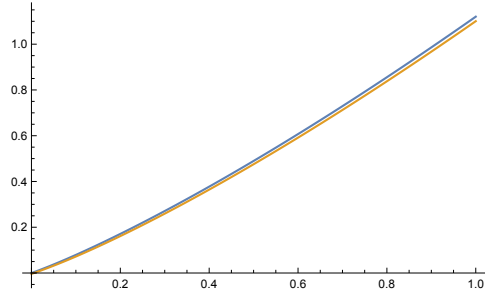}
\caption{Exact solution of \eqref{eq:ex1} ---blue--- and its approximation ---orange--- by using the predictor-corrector method proposed.}
\label{fig:fig1}
\end{figure}

\subsection{Example 2}
Let us now consider the fractional initial value problem
\begin{equation}\label{eq:ex3}
\begin{cases}
\displaystyle{\fdabc{0}{\alpha} y(t)=\exp(-t\,y), \quad t \in [0,1]}, \\[2mm]
y(0)=y_{0}.
\end{cases}
\end{equation}
If we consider $20$ equidistant nodes in $[0,1]$, from \eqref{eq:predictor} we obtain for $\alpha=9/10$ and the initial condition $y(0)=1$ the following table of predicted values in the interval $[0,1]$,
\begin{multline*}
\{ 1.16712, 1.21523, 1.25704, 1.29398, 1.32682, 1.35609, 1.38222, \\
1.40554, 1.42635, 1.44492, 1.46148, 1.47623, 1.48936, 1.50104, \\
1.51141, 1.52061, 1.52876, 1.53596, 1.54231, 1.54789 \}.
\end{multline*}
Moreover, from \eqref{eq:corrector} for the corrected values we have
\begin{multline*}
\{ 1.16513, 1.21157, 1.25184, 1.28737, 1.31892, 1.34702, 1.37209, \\
1.39447, 1.41445, 1.43229, 1.4482, 1.46239, 1.47504, 1.4863, 1.49631, \\
1.5052, 1.51309, 1.52007, 1.52624, 1.53168 \}.
\end{multline*}
For computing these values (predictor and corrector) we have numerically solved for each step $k$ the implicit equations given by the scheme. Notice that if we consider the differential equation $y'=\exp(-t y)$ with initial condition $y(0)=1$ then numerically we have $y(1)=1.54153$.

In Figure \ref{fig:fig3} we show the approximation of the solution of \eqref{eq:ex3} found by using the predictor-corrector method proposed.
\begin{figure}[ht!]
\centering 
\includegraphics[width=0.4\textwidth]{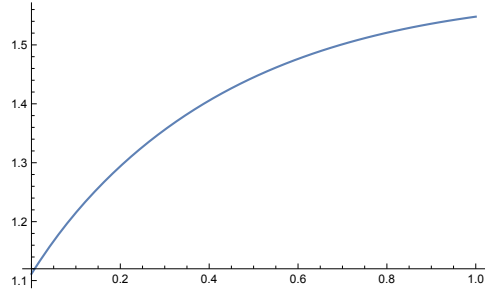}
\caption{Approximation of the solution of \eqref{eq:ex3} by using the predictor-corrector method proposed.}
\label{fig:fig3}
\end{figure}

\subsection{Example 3}
Let us consider the fractional initial value problem
\begin{equation}\label{eq:ex4}
\begin{cases}
\displaystyle{\fdabc{0}{\alpha} y(t)=y, \quad t \in [0,1]}, \\[2mm]
y(0)=y_{0}.
\end{cases}
\end{equation}
First of all, we shall solve explicitly this fractional initial value problem by using Laplace transform. In doing so, if we apply the Laplace transform to the first equation of \eqref{eq:ex4} we get
\[
\frac{B(\alpha)}{1-\alpha} \frac{p^{\alpha} {\mathcal{L}}(f(t))(p)-p^{\alpha-1} y_{0}}{p^{\alpha}+\alpha/(1-\alpha)} = {\mathcal{L}}(f(t))(p),
\]
which implies
\[
{\mathcal{L}}(f(t))(p) = \frac{B(\alpha) p^{\alpha-1}}{p^{\alpha}(B(\alpha)+\alpha-1)-\alpha} y_{0}. 
\]
As a consequence, we obtain that
\begin{equation}\label{eq:gsm}
f(t)=\frac{B(\alpha)}{B(\alpha)+\alpha-1} y_{0} E_{\alpha} \left[ \frac{\alpha}{B(\alpha)+\alpha-1} t^{\alpha} \right]
\end{equation}
is the explicit solution of \eqref{eq:ex4}, where $E_{\alpha}(t)$ denotes the Mittag-Leffler function defined in \eqref{eq:mlf}.

By using the numerical scheme proposed in this article, with $\alpha=9/10$, $y_{0}=1$, and 20 points of discretization, we obtain the following table of numerical values for the solution of \eqref{eq:ex4} with the predictor
\begin{multline*}
\{1.1937, 1.27314, 1.35327, 1.43591, 1.52179, 1.6114, 1.70514, \\
1.80337, 1.90641, 2.01459, 2.12824, 2.24771, 2.37333, 2.50546, \\
2.64448, 2.79077, 2.94474, 3.10682, 3.27746, 3.45713 \},
\end{multline*}
and also the following values for the solution of \eqref{eq:ex4} with the corrector
\begin{multline*}
\{1.20134, 1.28315, 1.36621, 1.45204, 1.54137, 1.63472, 1.73253, \\
1.83515, 1.94297, 2.05632, 2.17559, 2.30113, 2.43333, 2.57257, \\
2.71929, 2.87389, 3.03685, 3.20863, 3.38973, 3.58067\}.
\end{multline*}
In Figure \ref{fig:fig4} we show the explicit solution of \eqref{eq:ex4} given by \eqref{eq:gsm} and the approximation of the solution of \eqref{eq:ex4} found by using the predictor-corrector method proposed. Moreover, in Figure \ref{fig:fig4bis} we show the explicit solution of \eqref{eq:ex4} given by \eqref{eq:gsm} and the approximation of the solution of \eqref{eq:ex4} found by using the predictor-corrector method proposed by considering $h=0.01$ as step size for the mesh, with $\alpha=9/10$, and $y_{0}=1$.
\begin{figure}[ht!]
\centering 
\includegraphics[width=0.4\textwidth]{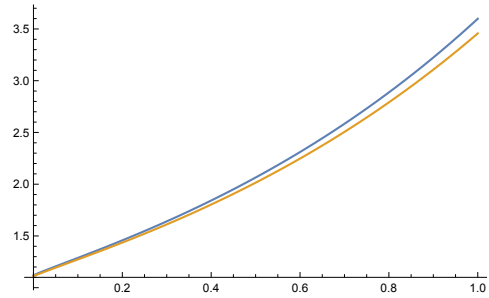}
\caption{Explicit solution of \eqref{eq:ex4} given by \eqref{eq:gsm} and the approximation of the solution of \eqref{eq:ex4} found by using the predictor-corrector method proposed, by considering $h=0.05$ as step size for the mesh, with $\alpha=9/10$, and $y_{0}=1$}
\label{fig:fig4}
\end{figure}

\begin{figure}[ht!]
\centering 
\includegraphics[width=0.4\textwidth]{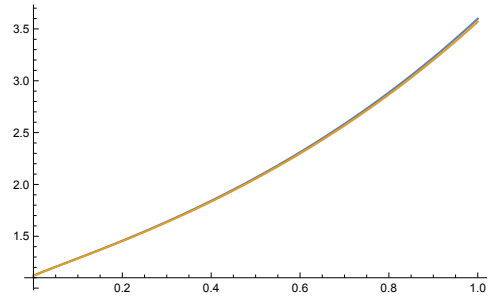}
\caption{Explicit solution of \eqref{eq:ex4} given by \eqref{eq:gsm} and the approximation of the solution of \eqref{eq:ex4} found by using the predictor-corrector method proposed, by considering $h=0.01$ as step size for the mesh, with $\alpha=9/10$, and $y_{0}=1$.}
\label{fig:fig4bis}
\end{figure}

\subsection{Example 4}
Let us consider the fractional logistic equation
\begin{equation}\label{eq:ex5}
\begin{cases}
\displaystyle{\fdabc{0}{\alpha} y(t)=r y(1-y), \quad t \in [0,1]}, \\[2mm]
y(0)=y_{0}.
\end{cases}
\end{equation}
In this case, we shall fix a value of $\alpha$ close to one in order to compare the numerical solution with the exact solution, which is just known in the classical situation \cite{fraceq}. Let us fix $r=-5$, $\alpha=99/100$, $y_{0}=1/2$, and step size $h=0.01$. Then, the numerical solution and the exact solution (of the classical case) are compared in Figure \ref{fig:fig5}.
\begin{figure}[ht!]
\centering 
\includegraphics[width=0.4\textwidth]{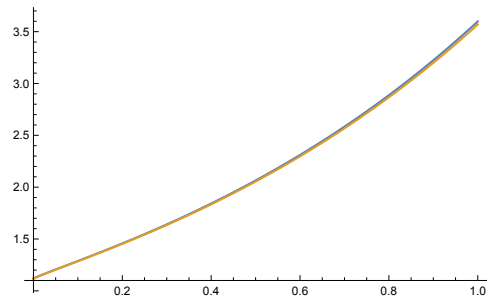}
\caption{Explicit solution of the classical logistic equation and the approximation of the solution of \eqref{eq:ex5} found by using the predictor-corrector method proposed, by considering $h=0.01$ as step size for the mesh, with $\alpha=99/100$, $r=-5$, and $y_{0}=1/2$.}
\label{fig:fig5}
\end{figure}

\subsection{Example 5}

Let us consider the following fractional analogue of the Lotka-Volterra equations
\begin{equation}\label{eq:ex6}
\begin{cases}
\displaystyle{\fdabc{0}{\alpha} x(t)=ax-b x y, \quad t \in [0,1]}, \\[2mm]
\displaystyle{\fdabc{0}{\alpha} y(t)=-cy+ d x y, \quad t \in [0,1]}, \\[2mm]
x(0)=x_{0}, \quad y(0)=y_{0}.
\end{cases}
\end{equation}
In this case, we shall compare the numerical solution with the solution of the classical (non fractional) case for some values of $\alpha$. Let us fix $a=1$, $b=2$, $c=3$, $d=4$, $x_{0}=y_{0}=1$, and step size $h=0.01$. Then, the solutions of the non fractional and fractional cases are compared in Figures \ref{fig:fig6}-\ref{fig:fig8}.
\begin{figure}[ht!]
\centering 
\begin{center}
  \includegraphics[width=0.25\textwidth]{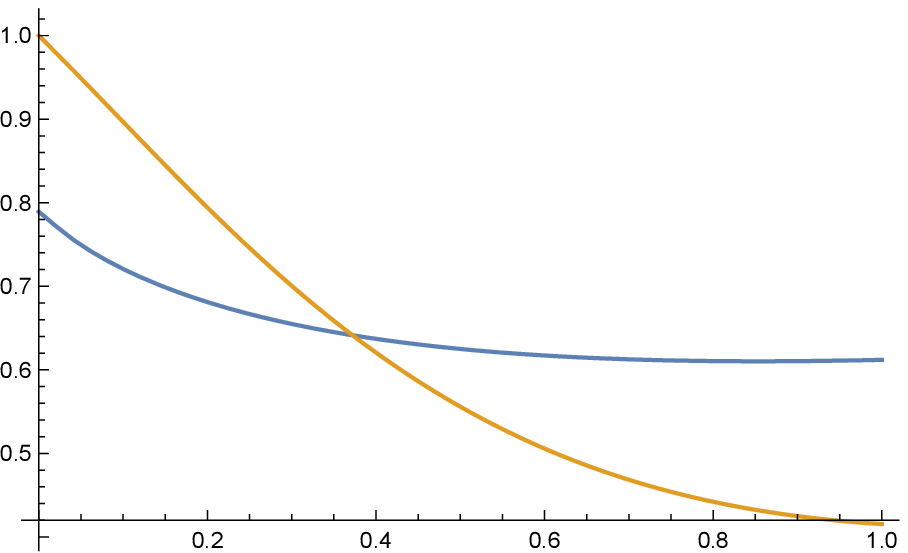} $\quad$
  \includegraphics[width=0.25\textwidth]{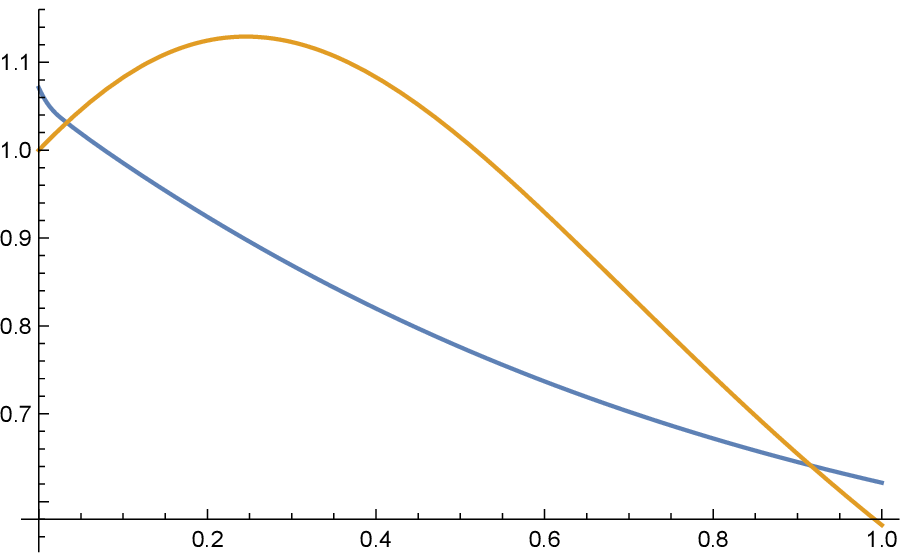}
\end{center}
\caption{Comparison between the numerical solution of \eqref{eq:ex6} and the solution of the classical (non fractional) case for the specific values of the parameters $a=1$, $b=2$, $c=3$, $d=4$, $x_{0}=y_{0}=1$, and step size $h=0.01$ with $\alpha=4/5$. }
\label{fig:fig6}
\end{figure}

\begin{figure}[ht!]
\centering 
\begin{center}
  \includegraphics[width=0.25\textwidth]{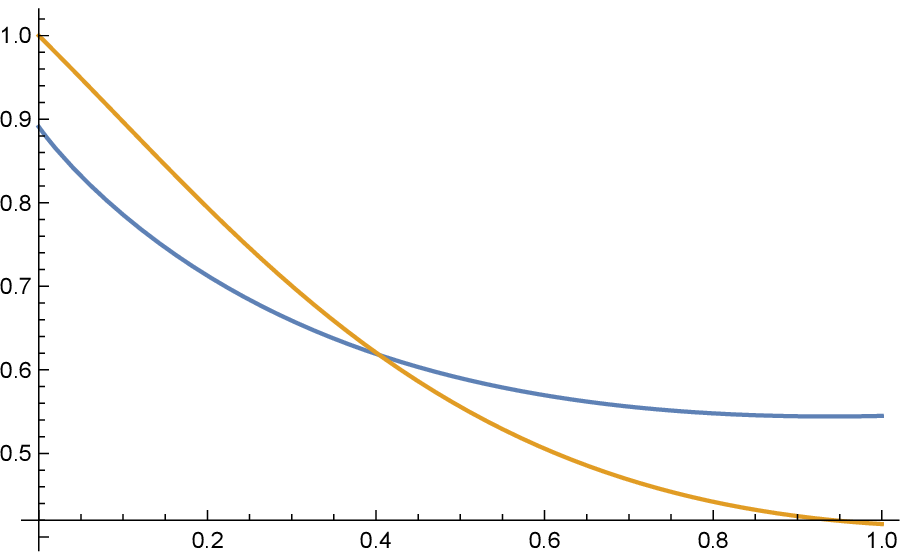} $\quad$
  \includegraphics[width=0.25\textwidth]{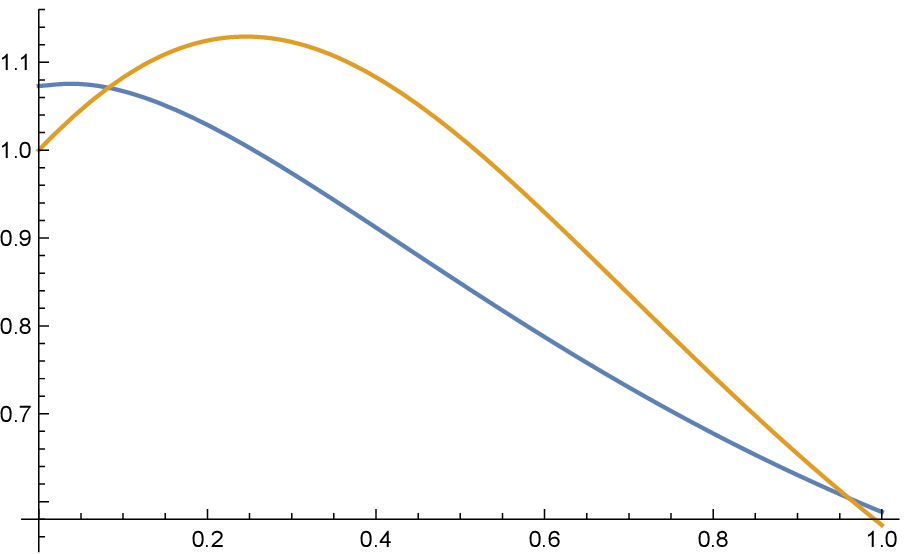}
\end{center}
\caption{Comparison between the numerical solution of \eqref{eq:ex6} and the solution of the classical (non fractional) case for the specific values of the parameters $a=1$, $b=2$, $c=3$, $d=4$, $x_{0}=y_{0}=1$, and step size $h=0.01$ with $\alpha=9/10$. }
\label{fig:fig7}
\end{figure}

\begin{figure}[ht!]
\centering 
\begin{center}
  \includegraphics[width=0.25\textwidth]{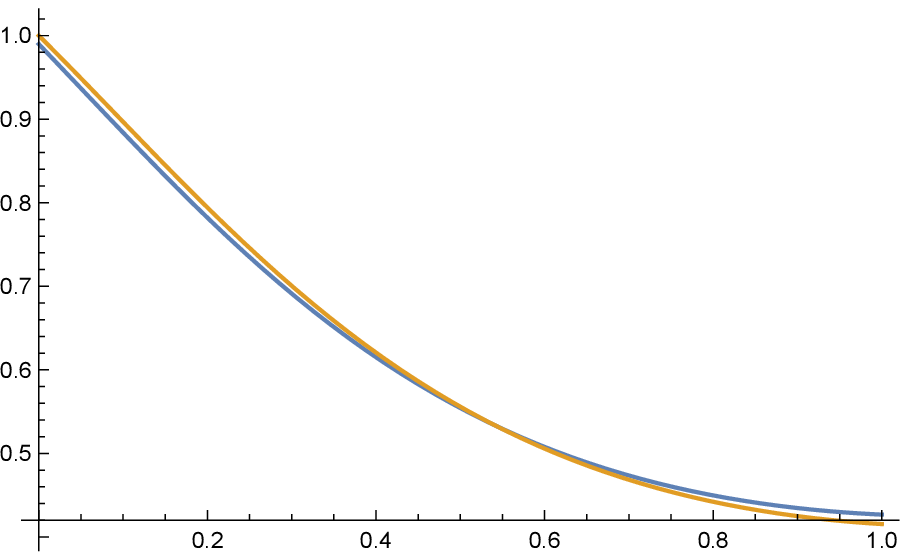} $\quad$
  \includegraphics[width=0.25\textwidth]{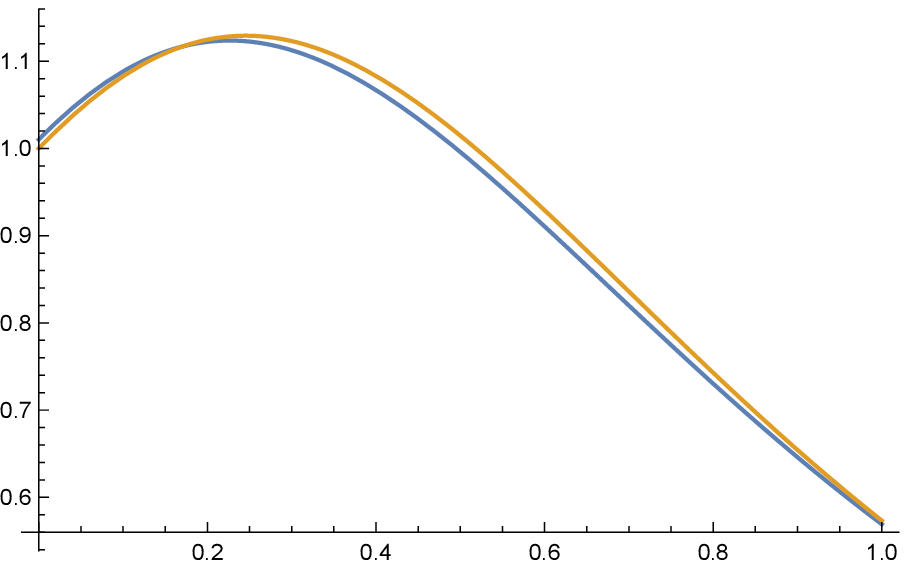}
\end{center}
\caption{Comparison between the numerical solution of \eqref{eq:ex6} and the solution of the classical (non fractional) case for the specific values of the parameters $a=1$, $b=2$, $c=3$, $d=4$, $x_{0}=y_{0}=1$, and step size $h=0.01$ with $\alpha=99/100$. }
\label{fig:fig8}
\end{figure}

\newpage

\section*{Conclusion}
In this paper we have presented a numerical scheme to solve fractional initial value problems involving the Atangana-Baleanu fractional derivative. Some examples have been presented in order to show how the method works in different situations, running from very simple fractional initial value problems, fractional extensions of $y'=t$ or $y'=y$, to fractional logistic equation, for which the exact solution is not known. A fractional analogue of the Lotka-Volterra equations has been also considered. It should be finally mentioned that our goal here is not to exploit all possible situations covered by this numerical scheme, but to emphasize that it can be used to solve numerically many fractional initial value problems. Further research related with the accuracy of the method is now under analysis.

\end{document}